\documentclass[11pt, a4paper]{article}

\usepackage{amsmath}
\usepackage{amssymb}
\usepackage{amsthm}
\usepackage{appendix}
\usepackage{verbatim}

\newtheorem{theorem}{Theorem}[section]

\newtheorem{lemma}{Lemma}[section]
\newtheorem{remark}{Remark}[section]

\begin{document}
\title{Hardy Subspaces with Sparse Fourier Spectrum and M\"{u}ntz Spaces}
\author{Elias Zikkos\\
Department of Mathematics, Khalifa University, \\
Abu Dhabi, United Arab Emirates\\
email address:  elias.zikkos@ku.ac.ae and eliaszikkos@yahoo.com}

\maketitle

\begin{abstract}
Let $\Lambda=\{\lambda_n\}_{n=1}^{\infty}\subset\mathbb{N}$ with $\lambda_n$ strictly increasing and such that
$\sum_{n=1}^{\infty}\lambda_n^{-1}<\infty$.
We show that a Hardy subspace $H^2 (\mathbb{D}, \Lambda)$ consisting of functions with sparse Fourier spectrum $\Lambda$
coincides with a M\"{u}ntz space $\overline{M^2_{\Lambda}}(\mathbb{D})$ characterized by square-summability of coefficients relative to a biorthogonal family. As consequences, we obtain a new characterization of the Hardy norm in $H^2 (\mathbb{D}, \Lambda)$ and an integral representation formula for the Fourier coefficients. The proof uses the biorthogonal representation developed in
\cite{Zikkos2026Acta}.
\end{abstract}

2020 Mathematics Subject Classifications: 30H10, 41A30

Keywords: Hardy Spaces, M\"{u}ntz spaces, sparse Fourier spectrum, biorthogonal systems

\section{Introduction and our main result}

Let $\mathbb{D}:=\{z:\, |z|<1\}$ be the unit disk and let  $H^2(\mathbb{D})$ be the Hardy space of functions analytic in
$\mathbb{D}$ such that
\[
\sup_{0<r<1} \int_{0}^{2\pi} |f(re^{i\theta})|^2\, d\theta<\infty.
\]
It is well known that an analytic function $f(z)=\sum_{n=0}^{\infty} a_n z^n$ in
$\mathbb{D}$ belongs to the space $H^2(\mathbb{D})$
if and only if $\sum_{n=0}^{\infty} |a_n|^2<\infty$. The standard norm is
\[
||f||_{H^2} : = \left(\sum_{n=0}^{\infty} |a_n|^2\right)^{1/2}.
\]
The Taylor coefficients $a_n$ are equal to the Fourier coefficients
\[
c_n=\frac{1}{2\pi}\int_{0}^{2\pi} f(e^{i\theta})\cdot e^{-i n\theta}\, d\theta,\qquad n=0, 1, 2, \dots
\]
of its boundary function $f(e^{i\theta})$, and the Fourier coefficients $c_n$ vanish for negative $n$ (\cite[Theorem 3.4]{Duren}).

For various properties of functions in $H^p(\mathbb{D})$ spaces one may consult \cite{Duren,Jevtic,Koosis,Mashreghi}.

\begin{remark}
Consider a sequence $\Lambda=\{\lambda_n\}_{n=1}^{\infty}$ which is a subset of the integer set $\mathbb{N}$,
with $\lambda_n$ strictly increasing, and such that
\begin{equation}\label{convergence}
\sum_{n=1}^{\infty} \frac{1}{\lambda_n}<\infty.
\end{equation}
The main result of this paper is Theorem \ref{HS}. We show that the Hardy subspace $H^2 (\mathbb{D}, \Lambda)$ with sparse Fourier spectrum $\Lambda$ is precisely the M\"{u}ntz space $\overline{M^2_{\Lambda}}(\mathbb{D})$ whose coefficient sequence
$\{\langle f, r_n\rangle_{L^2 (0,1)}\}$ is square-summable, where $\{r_n\}_{n=1}^{\infty}$ is a family
biorthogonal to the system $\{t^{\lambda_n}\}_{n=1}^{\infty}$ in $L^2(0,1)$.
This creates a connection between Hardy space theory and the geometry of M\"{u}ntz spaces, allowing techniques developed for M\"{u}ntz spaces
to be interpreted in terms of sparse Fourier expansions. In particular, the result provides a new characterization of the Hardy norm through biorthogonal coefficients as well as an integral representation formula for the Fourier coefficients.
\end{remark}

We let $H^2 (\mathbb{D}, \Lambda)$ consist of all the functions $f\in H^2 (\mathbb{D})$
such that their Fourier coefficients vanish whenever $n\notin \Lambda$.
That is,

\begin{align*}
H^2(\mathbb{D}, \Lambda): & = \left\{f\in H^2(\mathbb{D}):\,\, f(z)=\sum_{n=1}^{\infty} c_{\lambda_n} z^{\lambda_n}\right\}\\
& =  \left\{\text{f is analytic on $\mathbb{D}$}:\,\, f(z)=\sum_{n=1}^{\infty} c_{\lambda_n} z^{\lambda_n},\,\,
\{c_{\lambda_n}\}\in \ell^2\}\right\}.
\end{align*}

In order to introduce the M\"{u}ntz space $\overline{M^2_{\Lambda}}(\mathbb{D})$, we first
denote by $M_{\Lambda}$ the system $\{e_n(t): = t^{\lambda_n}\}_{n=1}^{\infty}$ and by $\overline{M_{\Lambda}}$ the closed span of
$M_{\Lambda}$ in $L^2 (0,1)$. Due to \eqref{convergence}, it follows from the M\"{u}ntz-Sz\'{a}sz theorem that $\overline{M_{\Lambda}}$
is a proper subspace of $L^2 (0,1)$. Moreover, as proved by Clarkson and Erd\H{o}s
(see \cite{CE} and \cite[Corollary 6.2.4]{Gurariy}),
any function $f\in\overline{M_{\Lambda}}$ extends analytically in $\mathbb{D}$ and it is represented as a power series of the form
\begin{equation}\label{analyticext}
f(z)=\sum_{n=1}^{\infty} a_{\lambda_n} z^{\lambda_n}.
\end{equation}
\begin{remark}
We denote the space of these analytic extensions by $\overline{M_{\Lambda}}(\mathbb{D})$.
\end{remark}

We point out that $M_{\Lambda}$ is a minimal system in $\overline{M_{\Lambda}}$, in other words,
every element of $M_{\Lambda}$ does not belong to the closed span of the remaining elements in $L^2 (0,1)$
(see \cite[Proposition 6.1.4]{Gurariy}). This minimal system has a unique biorthogonal family in $\overline{M_{\Lambda}}$ ,
that is there exists a system of functions
$r_{\Lambda}:=\{r_n(t)\}_{n=1}^{\infty}\subset \overline{M_{\Lambda}}$ such that
\[
\langle e_n, r_m\rangle_{L^2 (0,1)} =\begin{cases} 1, & m=n, \\  0, & m\not=n,\end{cases}\qquad \text{where}\qquad
\langle f, g\rangle_{L^2 (0,1)}:=\int_{0}^{1} f(t)\overline{g(t)}\, dt.
\]

In \cite[Theorem 1]{Zikkos2026Acta} we investigated the properties of the biorthogonal families $M_{\Lambda}$ and $r_{\Lambda}$
inside the space $\overline{M_{\Lambda}}$. Motivated by the Clarkson-Erd\H{o}s theorem, we proved that
for each $n\in\mathbb{N}$, the $a_{\lambda_n}$ coefficient appearing in \eqref{analyticext} is equal to the
inner product $\langle f, r_{n} \rangle_{L^2 (0,1)}$.
Moreover, we showed that the family  $r_{\Lambda}$ is complete in $\overline{M_{\Lambda}}$,
in other words the closed span of $r_{\Lambda}$ in $\overline{M_{\Lambda}}$ is equal to $\overline{M_{\Lambda}}$; hence the families
$r_{\Lambda}$ and $M_{\Lambda}$ are Markushevich bases for the space $\overline{M_{\Lambda}}$.
We even proved that these families are strong Markushevich bases and applied the results to constructing compact operators on $\overline{M_{\Lambda}}$ that admit spectral synthesis (see \cite[Theorems 2, 3, 4]{Zikkos2026Acta}).

A partial result from \cite[Theorem 1]{Zikkos2026Acta} is the following.
\begin{theorem}\label{biorthogonalsystem}
Let $\Lambda\subset\mathbb{N}$ such that $\Lambda$ satisfies $(1.1)$. Then there exists a family of functions
\[
r_{\Lambda}=\{r_{n}:\,\, n\in\mathbb{N}\}\subset \overline{M_{\Lambda}}
\]
so that $r_{\Lambda}$ is the unique biorthogonal sequence to $M_{\Lambda}$ in the space $\overline{M_{\Lambda}}$, and such that
each function $f\in \overline{M_{\Lambda}}$ extends as an analytic function in the unit disk $\mathbb{D}$,
hence its analytic extension belongs to $\overline{M_{\Lambda}}(\mathbb{D})$, so that
\begin{equation}\label{representationf}
f(z)=\sum_{n=1}^{\infty}\langle f, r_{n} \rangle_{L^2 (0,1)} z^{\lambda_n},
\end{equation}
converging uniformly on compact subsets of $\mathbb{D}$.
\end{theorem}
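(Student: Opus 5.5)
The plan has three parts: construct the biorthogonal family $r_\Lambda$, prove that $f\mapsto \langle f,r_n\rangle$ recovers the Clarkson--Erd\H{o}s coefficient $a_{\lambda_n}$, and deduce uniform convergence on compacta.

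\textbf{Existence and uniqueness of $r_\Lambda$.} For each $n$ let $E_n$ be the closed span in $L^2(0,1)$ of $\{e_m\}_{m\neq n}$. By minimality (\cite[Proposition 6.1.4]{Gurariy}), $e_n\notin E_n$. Let $P_n$ be the orthogonal projection onto $E_n$, and set
\[
r_n:=\frac{e_n-P_n e_n}{\|e_n-P_n e_n\|^2}.
\]
This lies in $\overline{M_\Lambda}$, is orthogonal to every $e_m$ with $m\ne n$, and satisfies $\langle e_n,r_n\rangle=1$. For uniqueness, suppose $s\in\overline{M_\Lambda}$ is also biorthogonal. Then $r_n-s$ is orthogonal to every $e_m$, hence to $\overline{M_\Lambda}$, while lying in it, so $r_n=s$.

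\textbf{Identifying the coefficients.} By Clarkson--Erd\H{o}s, each $f\in\overline{M_\Lambda}$ extends to $f(z)=\sum a_{\lambda_n}z^{\lambda_n}$ in $\mathbb{D}$. It remains to show $a_{\lambda_n}=\langle f,r_n\rangle$. Both sides are linear in $f$, and they agree on finite linear combinations of the $e_m$ by biorthogonality. The functional $f\mapsto\langle f,r_n\rangle$ is clearly continuous on $L^2(0,1)$. So the key step, and the main obstacle, is to show that $f\mapsto a_{\lambda_n}$ is continuous on $\overline{M_\Lambda}$ in the $L^2(0,1)$ norm.

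For this I would invoke the quantitative form of Clarkson--Erd\H{o}s, i.e.\ the bounded Bernstein/Markov-type estimate for M\"untz polynomials (\cite[Ch.~6]{Gurariy}): under \eqref{convergence}, for every $\epsilon>0$ there is $C_\epsilon$ with
\[
|a_{\lambda_n}|\le C_\epsilon(1+\epsilon)^{\lambda_n}\|p\|_{L^2(0,1)}
\]
for all M\"untz polynomials $p=\sum a_{\lambda_n}t^{\lambda_n}$. In fact a fixed-$n$ bound $|a_{\lambda_n}|\le K_n\|p\|$ is all that is needed here. It follows because $a_{\lambda_n}=\langle p,r_n\rangle$ for polynomials, so one can take $K_n=\|r_n\|=1/\operatorname{dist}(e_n,E_n)<\infty$.

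Now take M\"untz polynomials $p_k\to f$ in $L^2$. The uniform estimate gives $\sup_{|z|\le\rho}|p_k(z)-p_j(z)|\le C\|p_k-p_j\|\sum_n(1+\epsilon)^{\lambda_n}\rho^{\lambda_n}$, and the series converges when $(1+\epsilon)\rho<1$. Hence $p_k$ converges locally uniformly in $\mathbb{D}$, necessarily to the analytic extension of $f$, since on $(0,1)$ the limit agrees with $f$ a.e. Coefficients of locally uniform limits are limits of coefficients, so
\[
a_{\lambda_n}=\lim_k\langle p_k,r_n\rangle=\langle f,r_n\rangle.
\]

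\textbf{Uniform convergence on compacta.} Apply the same estimate directly to $f$: $|\langle f,r_n\rangle|\le C_\epsilon(1+\epsilon)^{\lambda_n}\|f\|$, which again passes to $f$ by density. This bounds the general term of \eqref{representationf} on $|z|\le\rho$ by a summable sequence, so the series converges uniformly on compact subsets of $\mathbb{D}$ by the Weierstrass M-test. This establishes \eqref{representationf}.
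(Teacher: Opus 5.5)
The paper does not prove this statement; it quotes it from \cite[Theorem 1]{Zikkos2026Acta}, so your argument has to stand on its own. It essentially does. The projection construction of $r_n$ and the uniqueness argument are correct. Once you grant the uniform bound $\|r_n\|_{L^2(0,1)}\le C_\epsilon(1+\epsilon)^{\lambda_n}$, the rest is sound: upgrading $L^2$-convergence $p_k\to f$ to locally uniform convergence on $\mathbb{D}$, identifying the limit with the extension of $f$, and applying the M-test.

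Two corrections are needed.

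\emph{First}, delete the sentence claiming that a fixed-$n$ bound $|a_{\lambda_n}(p)|\le K_n\|p\|$ ``is all that is needed''; it is wrong. On M\"untz polynomials that bound is automatic from biorthogonality, so it says nothing about the Taylor coefficient of the extension of a non-polynomial $f$. The real difficulty is to link $\lim_k\langle p_k,r_n\rangle$ to that coefficient. In your argument this link comes only from the uniform-in-$n$ bound, because that is what forces locally uniform convergence of the $p_k$ in $\mathbb{D}$.

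\emph{Second}, that bound is where all the analysis lives, and it does not follow from \eqref{convergence} alone: pairs of nearly coincident real exponents destroy it. Here it holds because $\lambda_{n+1}-\lambda_n\ge 1$. If you want to avoid citing it, the Cauchy determinant gives
\[
\|r_n\|_{L^2(0,1)}=\frac{1}{\operatorname{dist}(e_n,E_n)}=\sqrt{2\lambda_n+1}\,\prod_{m\neq n}\frac{\lambda_m+\lambda_n+1}{|\lambda_m-\lambda_n|}.
\]
Split the product into the ranges $\lambda_m<\lambda_n/2$, $\lambda_n/2\le\lambda_m\le 2\lambda_n$ and $\lambda_m>2\lambda_n$. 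Using $\#\{m:\lambda_m\le x\}=o(x)$, together with integrality in the middle range, shows the product is $e^{o(\lambda_n)}$.

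For comparison, the tools already in this paper give a shorter route to the coefficient identity, with no growth bound on $\|r_n\|$ at all. Take the Clarkson--Erd\H{o}s expansion $f(z)=\sum_m a_{\lambda_m}z^{\lambda_m}$ as given.
\begin{itemize}
\item For $0<\rho<1$ the series for $f(\rho t)$ converges uniformly on $[0,1]$, hence in $L^2(0,1)$, so biorthogonality gives $\langle f(\rho\,\cdot),r_n\rangle=a_{\lambda_n}\rho^{\lambda_n}$.
\item The first half of the proof of Lemma~\ref{converse} (or simply strong continuity of dilations on $L^2(0,1)$) gives $f(\rho\,\cdot)\to f$ in $L^2(0,1)$.
\item Letting $\rho\to 1^-$ yields $a_{\lambda_n}=\langle f,r_n\rangle$.
\end{itemize}
Locally uniform convergence of \eqref{representationf} is then automatic, since it is a power series converging in $\mathbb{D}$.

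That route treats Clarkson--Erd\H{o}s as a black box. Yours does not need it, since you effectively reprove its $L^2$ version. It also yields the quantitative estimate $|\langle f,r_n\rangle|\le C_\epsilon(1+\epsilon)^{\lambda_n}\|f\|_{L^2(0,1)}$, at the price of depending on the Schwartz-type bound for $\|r_n\|$.
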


We now define the particular subspace of $\overline{M_{\Lambda}}(\mathbb{D})$ which coincides with the space
$H^2(\mathbb{D}, \Lambda)$.
Let
\[
\overline{M^2_{\Lambda}}(\mathbb{D}):=\left\{f\in \overline{M_{\Lambda}}(\mathbb{D}):\,\,
\{\langle f, r_n\rangle_{L^2 (0,1)}\}\in \ell^2\right\}.
\]
Then the following is true. The proof is given in Section 4.
\begin{theorem}\label{HS}
Let $\Lambda=\{\lambda_n\}_{n=1}^{\infty}\subset\mathbb{N}$ such that $\Lambda$
satisfies \eqref{convergence}.
Let $r_{\Lambda}$ be the biorthogonal family to $M_{\Lambda}$ as in Theorem \ref{biorthogonalsystem}.
Then
\begin{equation}\label{equivalence}
H^2(\mathbb{D}, \Lambda)=\overline{M^2_{\Lambda}}(\mathbb{D}).
\end{equation}
Moreover
\begin{equation}\label{norms}
||f||_{H^2}^2=\sum_{n=1}^{\infty}\left| \langle f, r_n\rangle_{L^2 (0,1)} \right|^2.
\end{equation}
In addition, the following holds for the Fourier $c_{\lambda_n}$ coefficients of $f\in H^2 (\mathbb{D}, \Lambda)$: for every fixed $\theta\in [0,2\pi]$
\begin{equation}\label{coeff}
c_{\lambda_n}=\left(\int_{0}^{1} f(te^{i\theta})\cdot \overline{r_n(t)}\, dt\right)e^{-i\theta\lambda_n}\qquad n=1,2,\dots
\end{equation}
\end{theorem}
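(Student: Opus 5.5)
The plan is to prove the two inclusions in \eqref{equivalence} separately. Along the way we identify Taylor coefficients with the inner products $\langle f, r_n\rangle_{L^2(0,1)}$; \eqref{norms} then follows from Parseval, and \eqref{coeff} from a rotation argument.

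\emph{Inclusion $\overline{M^2_{\Lambda}}(\mathbb{D})\subseteq H^2(\mathbb{D},\Lambda)$.} Take $f\in\overline{M^2_{\Lambda}}(\mathbb{D})$. By Theorem \ref{biorthogonalsystem}, $f(z)=\sum_n \langle f,r_n\rangle z^{\lambda_n}$ in $\mathbb{D}$. The coefficients are square-summable, so $f\in H^2(\mathbb{D})$ with spectrum in $\Lambda$. By uniqueness of Taylor coefficients, $c_{\lambda_n}=\langle f,r_n\rangle$, and this gives \eqref{norms} for such $f$.

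\emph{Inclusion $H^2(\mathbb{D},\Lambda)\subseteq \overline{M^2_{\Lambda}}(\mathbb{D})$.} This is the main step. Given $f=\sum c_{\lambda_n}z^{\lambda_n}$ with $\{c_{\lambda_n}\}\in\ell^2$, we must show that the restriction $f|_{(0,1)}$ lies in $L^2(0,1)$ and in fact in $\overline{M_\Lambda}$. We use the partial sums $f_N(t)=\sum_{n\le N}c_{\lambda_n}t^{\lambda_n}\in \operatorname{span} M_\Lambda$.

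\begin{itemize}
\item First, $f|_{[0,1)}\in L^2(0,1)$. Each $f_N$ is a polynomial, so by the Fej\'er--Riesz inequality (or Hilbert's inequality for the Gram matrix $1/(\lambda_n+\lambda_m+1)$),
\[
\int_0^1|f_N(t)|^2dt\le \tfrac12\int_{-1}^{1}|f_N(t)|^2dt\le \tfrac{\pi}{2}\cdot\frac{1}{2\pi}\int_0^{2\pi}|f_N(e^{i\theta})|^2d\theta=\tfrac{\pi}{2}\sum_{n\le N}|c_{\lambda_n}|^2 .
\]
\item Applying the same bound to $f_M-f_N$ shows that $\{f_N\}$ is Cauchy in $L^2(0,1)$.
\item Its $L^2$ limit agrees pointwise on $[0,1)$ with $f$, since the series converges there. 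Hence $f|_{(0,1)}\in\overline{M_\Lambda}$.
\item Its analytic extension from Theorem \ref{biorthogonalsystem} coincides with $f$ on $[0,1)$, hence on $\mathbb{D}$ by the identity theorem.
\item Continuity of $g\mapsto\langle g,r_n\rangle$ in $L^2(0,1)$ together with biorthogonality gives $\langle f,r_n\rangle=\lim_N\langle f_N,r_n\rangle=c_{\lambda_n}$.
\end{itemize}

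So the coefficient sequence is in $\ell^2$, and \eqref{norms} holds because $\|f\|_{H^2}^2=\sum|c_{\lambda_n}|^2$. The one delicate point is the $L^2(0,1)$ control by the $H^2$ norm, i.e. the Fej\'er--Riesz / Hilbert-inequality step. I expect this to be the main obstacle, and would cite the Fej\'er--Riesz inequality for $H^2$.

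\emph{Formula \eqref{coeff}.} Fix $\theta$ and set $g(z)=f(ze^{i\theta})=\sum c_{\lambda_n}e^{i\lambda_n\theta}z^{\lambda_n}$. Then $g\in H^2(\mathbb{D},\Lambda)$ with the same norm. By the previous step, $g|_{(0,1)}\in\overline{M_\Lambda}$ and $\langle g,r_n\rangle=c_{\lambda_n}e^{i\lambda_n\theta}$. Writing out the inner product as $\int_0^1 f(te^{i\theta})\overline{r_n(t)}\,dt$ and multiplying by $e^{-i\theta\lambda_n}$ yields \eqref{coeff}. The integral is absolutely convergent because $g\in L^2(0,1)$ and $r_n\in L^2(0,1)$.
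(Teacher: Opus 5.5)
Your proof is correct apart from one slip in the displayed estimate. The first inequality, $\int_0^1|f_N|^2\,dt\le\frac12\int_{-1}^1|f_N|^2\,dt$, is false in general: for $f_N(t)=t+t^2$ the two sides are $31/30$ and $8/15$. Drop the $\frac12$. Fej\'er--Riesz then gives $\int_0^1|f_N|^2\,dt\le\int_{-1}^1|f_N|^2\,dt\le\pi\sum_{n\le N}|c_{\lambda_n}|^2$. This is the same constant $\pi$ that Hilbert's inequality gives and that appears in Lemma \ref{1}. Only finiteness of the constant is used, so nothing else changes.

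For the main inclusion $H^2(\mathbb{D},\Lambda)\subseteq\overline{M^2_{\Lambda}}(\mathbb{D})$ you take a different route from the paper. The paper proceeds in three separate steps:
\begin{enumerate}
\item It proves $f|_{[0,1)}\in L^2(0,1)$ by applying Hilbert's inequality to the full series on $[0,\rho]$ and letting $\rho\to1$ (Lemma \ref{1}).
\item It gets $f\in\overline{M_\Lambda}$ from a dilation lemma (Lemma \ref{converse}). There $f(\rho t)\to f(t)$ in $L^2(0,1)$ via Fatou and the generalized Lebesgue convergence theorem, and then the dilated series, which converges uniformly on $[0,1]$, is truncated.
\item It identifies $c_{\lambda_n}=\langle f,r_n\rangle$ by comparing with \eqref{representationf} and using uniqueness of power series coefficients.
\end{enumerate}
You instead apply the Hilbert/Fej\'er--Riesz bound to the tails $f_M-f_N$, so the Taylor partial sums are themselves Cauchy in $L^2(0,1)$. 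This gives $L^2$-membership and membership in $\overline{M_\Lambda}$ in one step. It also yields norm convergence of the Taylor series with the explicit rate $\|f-f_N\|_{L^2(0,1)}^2\le\pi\sum_{n>N}|c_{\lambda_n}|^2$. You then read off the coefficients from continuity of $g\mapsto\langle g,r_n\rangle$ and biorthogonality.

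Your version is shorter and avoids the measure-theoretic limiting argument, but it relies essentially on the $\ell^2$ summability of the coefficients. Lemma \ref{converse} needs only $f\in L^2(0,1)$ together with a power series on $[0,1)$, which is why the paper can reuse it for the non-$H^2$ function in Theorem \ref{lacunary}. Your rotation argument for \eqref{coeff} is the same as the paper's.
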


\section{Some other results}
\setcounter{equation}{0}

Now, suppose that the coefficients $\{a_{\lambda_n}\}$ of $f$ as in \eqref{analyticext} belong to the $\ell^2$ space: then
clearly $f\in H^2 (\mathbb{D}, \Lambda)$. We examine whether the converse implication holds.

``If $f\in H^2 (\mathbb{D}, \Lambda)$, is it true that the restriction of $f$ on the interval $[0,1)$ belongs to $\overline{M_{\Lambda}}$?''

The answer is affirmative; in fact it is a special case of the following result.
\begin{theorem}\label{first}
Suppose that the Fourier coefficients of a function $f\in H^2 (\mathbb{D})$ vanish for all $n\notin A$ where $A\subset \mathbb{N}\cup\{0\}$.
Then the restriction of $f$ on $[0,1)$ belongs to the closed span of the family $\{t^n\}_{n\in A}$ in $L^2 (0,1)$.
\end{theorem}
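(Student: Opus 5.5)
The restriction of $f$ to $[0,1)$ is $f(t)=\sum_{n\in A}c_n t^n$, and the plan is to show that the partial sums $S_N(t)=\sum_{n\in A,\,n\le N}c_n t^n$, which lie in the span of $\{t^n\}_{n\in A}$, converge to $f$ in $L^2(0,1)$. This reduces the statement to an $L^2(0,1)$ estimate on tails of power series with $\ell^2$ coefficients, namely the Hilbert (Hardy) inequality for the Hilbert matrix.

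The key computation is as follows. For any finitely supported, or more generally square-summable, coefficient sequence $\{b_n\}_{n\ge 0}$ we have
\[
\int_0^1\Bigl|\sum_{n} b_n t^n\Bigr|^2\,dt\le \sum_{m,n}\frac{|b_m||b_n|}{m+n+1}\le \pi\sum_n |b_n|^2 .
\]
This is Hilbert's inequality, and equivalently the restriction of $H^2$ functions to a radius is bounded into $L^2(0,1)$; it is Fejér--Riesz type. The obstacle is to justify this bound without circularity. I would either cite Hilbert's inequality directly or give the short Schur-test proof, using the weights $(m+1/2)^{-1/2}$ together with $\int_0^\infty \frac{dx}{(x+y)\sqrt{x}}=\pi/\sqrt{y}$. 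An alternative is the Fejér--Riesz inequality $\int_{-1}^{1}|g(t)|^2dt\le \frac12\int_0^{2\pi}|g(e^{i\theta})|^2d\theta$ for $g\in H^2$.

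Applying this bound to the tail $b_n=c_n$ for $n\in A$, $n>N$ (and $b_n=0$ otherwise), we first get it for finite sections of the tail. Fatou's lemma then extends it to the whole tail, since the series converges pointwise on $[0,1)$. This gives
\[
\|f-S_N\|_{L^2(0,1)}^2\le \pi\sum_{n\in A,\,n>N}|c_n|^2\longrightarrow 0 ,
\]
because $\{c_n\}\in\ell^2$ (the hypothesis $f\in H^2(\mathbb{D})$). In particular $f|_{[0,1)}\in L^2(0,1)$, and it is the $L^2$ limit of elements of $\mathrm{span}\{t^n\}_{n\in A}$, hence lies in the closed span.

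Finally I would note that the vanishing of the Fourier coefficients outside $A$ is used only to write $f(z)=\sum_{n\in A}c_nz^n$, via the identification of Taylor and Fourier coefficients recalled in the introduction. The case where $A$ is finite is trivial.
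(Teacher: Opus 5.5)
Your proof is correct, but it handles the closure step differently from the paper.

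The paper uses Hilbert's inequality only once, in Lemma~\ref{1}, to get $f|_{[0,1)}\in L^2(0,1)$. It then invokes Lemma~\ref{converse}: for any $f\in L^2(0,1)$ with a locally uniformly convergent expansion $\sum_{n\in A}a_nt^n$, the dilates $f(\rho t)$ converge to $f$ in $L^2(0,1)$ (Fatou plus the generalized dominated convergence theorem). Each $f(\rho t)$ is in turn uniformly approximated on $[0,1]$ by its truncations $\sum_{n\in A,\,n\le N}a_n\rho^n t^n$.

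You instead apply Hilbert's inequality to the tails and use Fatou once, obtaining $\|f-S_N\|_{L^2(0,1)}^2\le\pi\sum_{n\in A,\,n>N}|c_n|^2$. The dilation step disappears entirely. Your route is shorter and gives more: the Taylor partial sums themselves converge in $L^2(0,1)$, at a rate controlled by the $\ell^2$ tail of the coefficients.

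The paper's split buys generality. Lemma~\ref{converse} needs no information on the coefficients beyond $f\in L^2(0,1)$ and local uniform convergence. In that generality the partial sums can genuinely fail to converge. For example, $f(t)=(1+t)^{-2}=\sum_{n\ge 0}(-1)^n(n+1)t^n$ is bounded on $[0,1)$, yet $\|f-S_N\|_{L^2(0,1)}\to\infty$. So dilation is the appropriate tool there, and the paper reuses the lemma for $g=\sum z^{\lambda_n}\notin H^2(\mathbb{D})$ in Theorem~\ref{lacunary}.

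For Theorem~\ref{first} itself, where $\{c_n\}\in\ell^2$ is available, your tail estimate is the more economical proof.
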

\begin{proof}
It is a combination of Lemmas \ref{1} and \ref{converse}. In Lemma \ref{1} we prove that the restriction of $f$ on $[0,1)$ belongs to the $L^2 (0,1)$ space, whereas in Lemma \ref{converse} we show that if $f\in L^2 (0,1)$ and  $f(t)=\sum_{n\in A}a_n t^n$ on $[0,1)$ with uniform convergence on compact subsets of $[0,1)$, then $f$ belongs to the closed span of $\{t^n\}_{n\in A}$ in $L^2 (0,1)$.
\end{proof}

Next we ask whether the converse of Theorem \ref{first} is true.

``If a function $f\in L^2(0,1)$  belongs to the closed span of a family $\{t^n\}_{n\in A}$ in $L^2 (0,1)$ where
$A\subset \mathbb{N}\cup\{0\}$, is it true that $f$ extends analytically in $\mathbb{D}$ and its extension
belongs to the space $H^2 (\mathbb{D})$, with $f$ admitting the representation $f(z)=\sum_{n\in A} a_n z^{n}$ and $\{a_n\}\in \ell^2$?''\\

\smallskip

Clearly the answer is negative when $\sum_{n\in A}1/n=\infty$ due to the M\"{u}ntz-Sz\'{a}sz theorem.
If such a series diverges then the closed span of $\{t^n\}_{n\in A}$ is equal to $L^2(0,1)$.

But what about if $\sum_{n\in A}1/n<\infty$? The answer is negative in general, as the following result demonstrates whose proof
is given in Section 3.
\begin{theorem}\label{lacunary}
Let $\Lambda=\{\lambda_n\}_{n=1}^{\infty}$ be a lacunary sequence of natural numbers, that is,
there exists some $q>1$ so that $\lambda_{n+1}/\lambda_n>q$ for all $n\in\mathbb{N}$. Then
\[
g(z):=\sum_{n=1}^{\infty} z^{\lambda_n}
\]
is analytic in $\mathbb{D}$ but clearly $g\notin H^2 (\mathbb{D})$. However,
$g(t)$ belongs to $L^2 (0,1)$ and moreover $g(t)$ belongs to the closed span of $\{t^{\lambda_n}\}_{n=1}^{\infty}$ in $L^2 (0,1)$.
Hence, by definition, $g$ belongs to the space $\overline{M_{\Lambda}}(\mathbb{D})$.
\end{theorem}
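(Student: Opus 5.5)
Two things need to be shown: $g\in L^2(0,1)$, and $g$ lies in the closed span of $\{t^{\lambda_n}\}$ in $L^2(0,1)$. That $g$ is analytic in $\mathbb{D}$ and not in $H^2(\mathbb{D})$ is immediate: its coefficients are bounded by $1$, and the coefficient sequence $(1,1,1,\dots)$ is not in $\ell^2$. For the first claim, write $M(t):=\#\{n:\lambda_n\le t\}$. Lacunarity gives $\lambda_n\ge \lambda_1 q^{n-1}$, hence $M(t)\le C+\log t/\log q$ for $t\ge 1$.

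To bound $g$ on $[0,1)$, fix $t\in(0,1)$ and set $N=1/(1-t)$. Split the series $\sum t^{\lambda_n}$ at $\lambda_n\le N$. The terms with $\lambda_n\le N$ number at most $M(N)=O(\log N)$, and each is at most $1$. For the terms with $\lambda_n>N$, use $t^{\lambda}\le e^{-\lambda(1-t)}$ together with $\lambda_{n+k}\ge q^k\lambda_n$; this makes the tail at most $\sum_{k\ge0}e^{-q^k}$, a constant. Hence
\[
|g(t)|\le C_1+C_2\log\frac{1}{1-t},\qquad 0\le t<1,
\]
and since $\int_0^1 \log^2\frac{1}{1-t}\,dt<\infty$, we get $g\in L^2(0,1)$.

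For the second claim I invoke Lemma \ref{converse} with $A=\Lambda$. The function $g$ is in $L^2(0,1)$, and $g(t)=\sum t^{\lambda_n}$ converges uniformly on compact subsets of $[0,1)$ because it is a power series with radius $1$. The lemma then gives membership in the closed span. If I want to avoid relying on that lemma's exact hypotheses, there is a direct route. The partial sums $g_N(t)=\sum_{n\le N}t^{\lambda_n}$ are nonnegative and increase pointwise to $g$. So $0\le g-g_N\le g$, and by dominated convergence $\|g-g_N\|_{L^2(0,1)}\to0$, since $g\in L^2$. This shows $g$ is an $L^2$-limit of finite combinations of the $t^{\lambda_n}$. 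The final sentence of the theorem then follows from the definition of $\overline{M_{\Lambda}}(\mathbb{D})$ together with Theorem \ref{biorthogonalsystem}: the analytic extension of $g$ is $g$ itself, by uniqueness of the power series.

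The only real estimate is the logarithmic growth bound on $g$, and the lacunarity is used precisely there. Everything else is monotone convergence.
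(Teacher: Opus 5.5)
Your proof is correct, but you establish $g\in L^2(0,1)$ differently from the paper.

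\textbf{The paper's route.} The paper never bounds $g$ pointwise. It expands $|g(t)|^2=\sum_n\sum_k t^{\lambda_n+\lambda_k}$ and integrates term by term to get $\int_0^\rho |g|^2\le S:=\sum_n\sum_k \frac{1}{\lambda_n+\lambda_k}$. It then shows $S<\infty$ by splitting off the diagonal and using $\lambda_n+\lambda_{n+j}>q^j\lambda_n$ together with $\sum_n 1/\lambda_n<\infty$, which gives $S\le \sum_n \frac{1}{2\lambda_n}+2\bigl(\sum_n \frac{1}{\lambda_n}\bigr)\bigl(\sum_j q^{-j}\bigr)$.

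\textbf{Your route.} You prove the growth estimate $|g(t)|\le C_1+C_2\log\frac{1}{1-t}$ by splitting the series at $\lambda_n\approx 1/(1-t)$. The head is controlled by the counting function and the tail by the super-geometric decay $e^{-q^k}$. You then integrate.

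\textbf{What each buys.} Your route gives more information. It yields an explicit growth rate, and since $\log^p\frac{1}{1-t}$ is integrable for every $p<\infty$, it shows $g\in L^p(0,1)$ for all finite $p$. The paper's route is tailored to $L^2$, but it reduces the question to a quantity comparable to $\|g\|_{L^2(0,1)}^2$ itself: by monotone convergence, $\int_0^1 g^2=\sum_n\sum_k\frac{1}{\lambda_n+\lambda_k+1}$. So finiteness of $S$ is an exact criterion for $g\in L^2(0,1)$ for any increasing $\Lambda$, and lacunarity enters only to verify it. It also mirrors the Hilbert-inequality argument of Lemma \ref{1}.

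\textbf{Closed-span part.} Your primary step, invoking Lemma \ref{converse}, is exactly what the paper does. Your alternative is also valid and more elementary: the partial sums satisfy $0\le g-g_N\le g$, so dominated convergence applies. However, it depends on the coefficients of $g$ being nonnegative. The general Lemma \ref{converse} is still needed elsewhere in the paper for arbitrary coefficient sequences, for instance in the proof of Theorem \ref{HS}.
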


Therefore, even in the case when $\Lambda\subset\mathbb{N}$ satisfying $(1.1)$, the condition that a function $f$ belongs to the space
$\overline{M_{\Lambda}}$ does not guarantee alone that the analytic extension of $f$ in $\mathbb{D}$ belongs to $H^2 (\mathbb{D})$.
In other words, the space $\overline{M_{\Lambda}}(\mathbb{D})$ is not a subspace of $H^2 (\mathbb{D}, \Lambda)$.
\begin{remark}
This clearly changes if $f\in \overline{M^2_{\Lambda}}(\mathbb{D})$.
\end{remark}

\section{Auxiliary Lemmas and Theorem \ref{lacunary}}
\setcounter{equation}{0}

\subsection{The radial integrals $\int_0^1 |f(te^{i\theta})|^2\, dt$ are bounded}

Let
\[
f(z)=\sum_{n=0}^{\infty} c_n z^n
\]
be a function in the Hardy space $H^2(\mathbb{D})$.
We study the radial integrals
\[
\int_0^1 |f(te^{i\theta})|^2\, dt
\]
for a fixed $\theta\in[0,2\pi]$.

Although radial limits of $H^2$ functions need not exist for every $\theta$, the result below shows that the radial function
\[
t \mapsto f(te^{i\theta})
\]
belongs to $L^2(0,1)$ for each $\theta\in [0, 2\pi]$ and its norm is uniformly controlled by the $H^2$ norm of $f$.

\begin{lemma}\label{1}
If $f\in H^2(\mathbb{D})$, then for every $\theta\in[0,2\pi]$,
\begin{equation}\label{upperbound}
\int_{0}^{1} |f(te^{i\theta})|^2\, dt \le \pi ||f||_{H^2}^2.
\end{equation}
\end{lemma}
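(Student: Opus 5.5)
This is Hilbert's inequality (Fejér–Riesz type). Write $f(te^{i\theta})=\sum_{n\ge 0} c_n e^{in\theta} t^n$ and set $b_n:=c_n e^{in\theta}$, so that $|b_n|=|c_n|$ and $\sum|b_n|^2=\|f\|_{H^2}^2$. For fixed $0<R<1$, the series converges uniformly on $[0,R]$, so we may integrate term by term:
\[
\int_0^R |f(te^{i\theta})|^2\,dt=\sum_{m,n\ge 0} b_m\overline{b_n}\,\frac{R^{m+n+1}}{m+n+1}.
\]
Bounding the absolute value termwise, this is at most
\[
\sum_{m,n\ge0}\frac{|c_m||c_n|}{m+n+1}.
\]
The bound is independent of $R$, so monotone convergence as $R\to1^-$ gives the same estimate for $\int_0^1$. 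Note also that the radial function is defined at every point of $[0,1)$, so $\theta$ being arbitrary causes no difficulty.

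The key step, which I expect to carry the real content, is Hilbert's inequality in the form
\[
\sum_{m,n\ge0}\frac{x_m x_n}{m+n+1}\le \pi\sum_{n\ge0} x_n^2,\qquad x_n\ge0.
\]
This is a classical result and can simply be cited, e.g. via Hardy–Littlewood–Pólya. If a self-contained argument is wanted, I would use the Schur test with weights $n+\tfrac12$. Apply Cauchy–Schwarz to
\[
\frac{x_mx_n}{m+n+1}=\frac{x_m (m+\frac12)^{1/4}(n+\frac12)^{-1/4}}{\sqrt{m+n+1}}\cdot\frac{x_n(n+\frac12)^{1/4}(m+\frac12)^{-1/4}}{\sqrt{m+n+1}}.
\]
Then it suffices to show that
\[
\sum_{n\ge0}\frac{1}{(m+n+1)}\Bigl(\frac{m+\frac12}{n+\frac12}\Bigr)^{1/2}\le\pi
\]
for every $m$. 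Because $s\mapsto (s+a)^{-1}s^{-1/2}$ is convex and decreasing, the midpoint rule bounds the sum by the integral
\[
\int_0^\infty \frac{(m+\frac12)^{1/2}}{(s+m+\frac12)\,s^{1/2}}\,ds.
\]
The substitution $s=(m+\tfrac12)u^2$ shows this integral equals exactly $\pi$.

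Applying this inequality with $x_n=|c_n|$ yields $\int_0^1|f(te^{i\theta})|^2dt\le\pi\|f\|_{H^2}^2$, as required. The main obstacle is only getting the sharp constant $\pi$. A cruder approach, such as bounding the Hilbert matrix by the Hardy inequality, would give $2\pi$ or $4$. The midpoint-convexity comparison with weights $n+\tfrac12$ is what delivers $\pi$, so I would either cite Hilbert's inequality directly or carry out that comparison carefully.
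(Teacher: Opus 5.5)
Your proposal is correct and follows the paper's proof essentially verbatim. Like the paper, you integrate the double series over $[0,\rho]$, bound it termwise by $\sum_{m,n}|c_m||c_n|/(m+n+1)$, invoke Hilbert's inequality (Hardy--Littlewood--P\'olya) for the constant $\pi$, and let $\rho\to1^-$. Your extras, namely justifying term-by-term integration by uniform convergence on $[0,R]$ and deriving Hilbert's inequality by the Schur test with weights $n+\tfrac12$ (convexity, the midpoint rule, and the substitution $s=(m+\tfrac12)u^2$), are both correct and simply make explicit what the paper leaves to the citation.
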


\begin{proof}
We have $f(z)=\sum_{n=0}^{\infty}c_n z^n$ with $\{c_n\}\in\ell^2$.
Fix $\theta$ and write
\[
f(te^{i\theta}) = \sum_{n=0}^{\infty} c_ne^{i\theta n} t^n.
\]

Hence for any $\rho\in (0,1)$ we have
\begin{align*}
\int_{0}^{\rho} |f(te^{i\theta})|^2\, dt & =
\int_{0}^{\rho} \left|\sum_{n=0}^{\infty} c_ne^{i\theta n} t^n\right|^2 dt\\
& =  \int_{0}^{\rho} \sum_{n=0}^{\infty} \sum_{k=0}^{\infty}c_n\overline{c_k}e^{i\theta (n-k)}t^{n+k}\, dt\\
& \le  \sum_{n=0}^{\infty} \sum_{k=0}^{\infty}\frac{|c_n||c_k|}{n+k+1}.
\end{align*}

Since $\{c_n\}\in\ell^2$ then the double series
\[
\sum_{n=0}^{\infty} \sum_{k=0}^{\infty}\frac{|c_n||c_k|}{n+k+1}
\]
converges as a special case of Hilbert's Inequality \cite[Theorem 323]{Polya} and
in fact we have
\[
\sum_{n=0}^{\infty} \sum_{k=0}^{\infty}\frac{|c_n||c_k|}{n+k+1}\le \pi \sum_{n=0}^{\infty}|c_n|^2=\pi ||f||^2_{H^2}.
\]

Combining the above shows that for any $\rho\in (0,1)$ we have
\[
\int_{0}^{\rho} |f(te^{i\theta})|^2\, dt \le \pi ||f||_{H^2}^2.
\]
This uniform upper bound implies that \eqref{upperbound} is true.
\end{proof}

\subsection{A converse result to Clarkson-Erd\H{o}s}

The following result is known  in the case when $\Lambda=\{\lambda_n\}_{n=1}^{\infty}$ is a sequence of
positive real numbers in an increasing order such that \eqref{convergence} holds (see \cite[Corollary 6.2.4]{Gurariy} and \cite[Lemma 1]{Zikkos2026Acta}).  We reprove the result as given by us
in \cite[Lemma 1]{Zikkos2026Acta} $\bf without$ taking into assumption the convergence of the series in \eqref{convergence}.
In other words, the result holds even when the series $\sum_{n=1}^{\infty} 1/\lambda_n=\infty$. This $extension$ is needed for proving
Theorem \ref{first}.

\begin{lemma}\label{converse}
Let $\Lambda=\{\lambda_n\}_{n=1}^{\infty}\subset\mathbb{N}$ with the $\lambda_n$ in a strictly increasing order.
Suppose that $f\in L^2 (0,1)$ and $f(t)=\sum_{n=1}^{\infty} c_n t^{\lambda_n}$ for $t\in [0,1)$ with the series converging uniformly on compact
subsets of $[0, 1)$. Then $f$ belongs to the closed span of the family $\{t^{\lambda_n}\}_{n=1}^{\infty}$ in $L^2 (0,1)$.
\end{lemma}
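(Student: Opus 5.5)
The natural approach is dilation: for $0<\rho<1$ set $f_\rho(t):=f(\rho t)=\sum_{n=1}^{\infty} c_n\rho^{\lambda_n} t^{\lambda_n}$. The plan is to show two things: that each $f_\rho$ lies in the closed span $\overline{\mathrm{span}}\{t^{\lambda_n}\}$, and that $f_\rho\to f$ in $L^2(0,1)$ as $\rho\to 1^-$. Since the closed span is closed, these two facts together give $f\in\overline{\mathrm{span}}\{t^{\lambda_n}\}$.

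\emph{Step 1: each $f_\rho$ lies in the span.} The series for $f$ converges at $t=\rho'$ for any $\rho<\rho'<1$. Hence the terms $c_n\rho'^{\lambda_n}$ are bounded, say by $M$. Then on $[0,1]$
\[
|c_n\rho^{\lambda_n}t^{\lambda_n}|\le M(\rho/\rho')^{\lambda_n}.
\]
This bound is summable because $\lambda_n\ge n$. So the series for $f_\rho$ converges uniformly on $[0,1]$, and therefore also in $L^2(0,1)$. Its partial sums are polynomials in $\{t^{\lambda_n}\}$, which places $f_\rho$ in the closed span. Note that no Müntz-type condition is used here, which is consistent with the remark that (1.1) is not needed.

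\emph{Step 2: $f_\rho\to f$ in $L^2(0,1)$.} This is the main obstacle, because $f$ is only known to be in $L^2$, not continuous up to $1$. I would use the dilation operator $D_\rho g(t)=g(\rho t)$. By the substitution $s=\rho t$,
\[
\|D_\rho g\|_{L^2(0,1)}^2=\rho^{-1}\int_0^\rho |g(s)|^2\,ds\le \rho^{-1}\|g\|^2.
\]
So for $\rho\ge 1/2$ the operators $D_\rho$ are uniformly bounded, with norm at most $\sqrt2$.

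For continuous $g$ on $[0,1]$ we have $D_\rho g\to g$ uniformly, hence in $L^2$. Continuous functions are dense in $L^2(0,1)$. A standard $\varepsilon/3$ argument then gives $\|D_\rho f-f\|_{L^2}\to0$:
\[
\|D_\rho f-f\|\le \|D_\rho(f-g)\|+\|D_\rho g-g\|+\|g-f\|\le(\sqrt2+1)\|f-g\|+\|D_\rho g-g\|.
\]
One subtlety is that $f_\rho$ as a function on $[0,1]$ must equal $D_\rho f$. This holds because $\rho t<1$ for $t\in[0,1]$, so the series representation of $f$ applies at $\rho t$.

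\emph{Conclusion.} By Step 1, each $f_\rho$ is in the closed subspace $\overline{\mathrm{span}}\{t^{\lambda_n}\}$. By Step 2, $f_\rho\to f$ in $L^2(0,1)$ as $\rho\to1^-$, so $f$ belongs to that closed span.
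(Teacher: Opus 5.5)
Your proof is correct and shares the paper's architecture. You approximate $f$ by the dilates $f(\rho\,\cdot)$, observe that each is a uniform limit on $[0,1]$ of finite combinations of the $t^{\lambda_n}$, and show that $f(\rho\,\cdot)\to f$ in $L^2(0,1)$.

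The two arguments differ only in how that last convergence is proved:
\begin{itemize}
\item \textbf{The paper's route.} It first shows $\|f(\rho\,\cdot)\|_{L^2}\to\|f\|_{L^2}$, using Fatou's lemma for the lower bound and the substitution $u=\rho t$ for the upper bound. It then combines this with the pointwise convergence $f(\rho t)\to f(t)$ and the generalized dominated convergence theorem, applied to $|f(\rho t)-f(t)|^2\le 2(|f(\rho t)|^2+|f(t)|^2)$. This is where the series structure enters, through the continuity of $f$ on $[0,1)$.
\item \textbf{Your route.} You use the uniform bound $\|D_\rho\|\le\sqrt2$ for $\rho\ge 1/2$ together with density of $C[0,1]$ in $L^2(0,1)$. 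This proves the more general fact that $D_\rho g\to g$ in $L^2(0,1)$ for every $g\in L^2(0,1)$, with no pointwise information about $f$ needed.
\end{itemize}
The paper's version is a direct norm-convergence-plus-pointwise-convergence argument that avoids the density step. Yours is the more robust operator-theoretic argument and is independent of the power-series structure.

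In Step 1, your $M$-test bound is valid but more than you need. For $t\in[0,1]$ we have $\rho t\in[0,\rho]$, so uniform convergence of $\sum c_n\rho^{\lambda_n}t^{\lambda_n}$ on $[0,1]$ is exactly the assumed uniform convergence of the original series on the compact set $[0,\rho]$.
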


\begin{proof}

Fatou's Lemma and changing variables gives

\begin{align*}
\int_{0}^{1}|f(t)|^2\,dt\le \liminf_{\rho\to 1^-}\int_{0}^{1}|f(\rho t)|^2\,dt & \le
\limsup_{\rho\to 1^-}\int_{0}^{1}|f(\rho t)|^2\,dt \\
 & =  \limsup_{\rho\to 1^-}\int_{0}^{\rho}|f(u)|^2\, \frac{du}{\rho}\\ & \le
\limsup_{\rho\to 1^-}\frac{1}{\rho}\cdot \int_{0}^{1}|f(u)|^2\, du \\ & =
\int_{0}^{1}|f(t)|^2\,dt.
\end{align*}

The above implies that $\lim_{\rho\to 1^-}\int_{0}^{1}|f(\rho t)|^2\,dt$ exists and

\begin{equation}\label{limit}
\lim_{\rho\to 1^-}\int_{0}^{1}|f(\rho t)|^2\, dt = \int_{0}^{1}|f(t)|^2\, dt.
\end{equation}
Obviously one has
\[
|f(\rho t)-f(t)|^2\to 0\,\,\text{as}\,\, \rho\to 1^-
\]
and
\[
|f(\rho t)-f(t)|^2\le 2(|f(\rho t)|^2 + |f(t)|^2).
\]
Clearly
\[
2(|f(\rho t)|^2 + |f(t)|^2)\to 4|f(t)|^2\,\,\text{as}\,\, \rho\to 1^-,
\]
and from \eqref{limit} we get
\[
\int_{0}^{1} 2(|f(\rho t)|^2 + |f(t)|^2)\, dt \to \int_{0}^{1} 4|f(t)|^2\, dt \,\,\text{as}\,\, \rho\to 1^-.
\]
It then follows from the
\[
\text{\it{Generalized Lebesgue Convergence Theorem}}
\]
that
\[
\lim_{\rho\to 1^-}\int_{0}^{1}|f(\rho t)-f(t)|^2\, dt = 0.
\]
Therefore, for every $\epsilon>0$ there is some $0<\delta_{\epsilon}<1$ so that for all $\rho\in (\delta_{\epsilon}, 1)$ one has
\begin{equation}\label{epsilon}
\int_{0}^{1}|f(\rho t)-f(t)|^2\, dt < \epsilon.
\end{equation}

Next, for fixed $\epsilon$, $\delta_{\epsilon}$, as well as $\rho \in (\delta_{\epsilon}, 1)$, the series

\[
f(\rho t) = \sum_{n=1}^{\infty} c_{n} \cdot \rho^{\lambda_n}\cdot t^{\lambda_n}
\]
converges uniformly on the interval $[0,1]$. Hence, there is some positive integer $N$, depending on $\epsilon$ and $\rho$, such that
\[
\int_{0}^{1}\left| f(\rho t) - \sum_{n=1}^{N} c_{n} \cdot \rho^{\lambda_n}\cdot t^{\lambda_n}\right|^2\, dt<\epsilon.
\]
Combining this with \eqref{epsilon} and applying Minkowski's inequality shows that
\[
\int_{0}^{1}\left| f(t) - \sum_{n=1}^{N} c_{n} \cdot \rho^{\lambda_n}\cdot t^{\lambda_n}\right|^2\, dt<2\epsilon.
\]
Clearly now we conclude that $f$ belongs to the space $\overline{M_{\Lambda}}$.
\end{proof}

\subsection{Proof of Theorem \ref{lacunary}}

We have
\[
|g(t)|^2=\left(\sum_{n=1}^{\infty} t^{\lambda_n}\right)\left(\sum_{k=1}^{\infty} t^{\lambda_k}\right)=
\sum_{n=1}^{\infty}\sum_{k=1}^{\infty} t^{\lambda_n + \lambda_k}.
\]
Since $\int_{0}^{1} t^{\lambda_n + \lambda_k}\, dt<1/(\lambda_n + \lambda_k)$ we need to evaluate the double series
\[
S=\sum_{n=1}^{\infty}\sum_{k=1}^{\infty}\frac{1}{\lambda_n + \lambda_k}.
\]
If $S$ is a positive finite number then for any $\rho\in (0,1)$ we have
\[
\int_{0}^{\rho}|g(t)|^2=\sum_{n=1}^{\infty}\sum_{k=1}^{\infty}\int_{0}^{\rho}  t^{\lambda_n + \lambda_k} \, dt \le
\sum_{n=1}^{\infty}\sum_{k=1}^{\infty} \frac{1}{\lambda_n + \lambda_k}=S<\infty
\]
and this shows that $g\in L^2 (0,1)$.

To evaluate $S$, split the double sum into the diagonal terms $m=n$ and the off--diagonal terms: due to symmetry, we have

\[
S=\sum_{n=1}^{\infty}\frac{1}{\lambda_n + \lambda_n}
+2\sum_{n=1}^{\infty}\sum_{k=n+1}^{\infty}\frac{1}{\lambda_n + \lambda_k}.
\]

We easily get

\[
\sum_{n=1}^{\infty}\frac{1}{\lambda_n + \lambda_n}
=\sum_{n=1}^{\infty}\frac{1}{2\lambda_n}<\infty.
\]

Regarding $\sum_{n=1}^{\infty}\sum_{k=n+1}^{\infty}\frac{1}{\lambda_n + \lambda_k}$, for each fixed $n$ and $k\ge n+1$ we write $k=n+j$ for $j=1,2,\dots$. Since $\{\lambda_{n}\}$ is lacunary with $\lambda_{n+1}/\lambda_n>q>1$ then $\lambda_{n+j}>q^j\lambda_n$.
Thus $\lambda_n+\lambda_{n+j}>q^j\lambda_n$. Hence

\[
\sum_{n=1}^{\infty}\sum_{k=n+1}^{\infty}\frac{1}{\lambda_n + \lambda_k}
<
\sum_{n=1}^{\infty}\sum_{j=1}^{\infty}
\frac{1}{\lambda_n}\cdot q^{-j}=\left(\sum_{n=1}^{\infty}\frac{1}{\lambda_n}\right)\left(\sum_{j=1}^{\infty}q^{-j}\right)<\infty.
\]

Combining the above shows that $S$ is a positive finite number thus $g\in L^2 (0,1)$.
It then follows from Lemma \ref{converse} that $g$ belongs to the closed span of $\{t^{\lambda_n}\}_{n=1}^{\infty}$ in $L^2 (0,1)$.

\section{Proof of Theorem \ref{HS}}
\setcounter{equation}{0}

Suppose first that $f(z)\in \overline{M^2_{\Lambda}}(\mathbb{D})$ thus by definition
\[
f(z)=\sum_{n=1}^{\infty} \langle f, r_n\rangle_{L^2 (0,1)} z^{\lambda_n}
\]
and
\[
\{\langle f, r_n\rangle_{L^2 (0,1)}\} \in\ell^2.
\]
Then obviously $f\in H^2 (\mathbb{D}, \Lambda)$.

Suppose now that $f\in H^2 (\mathbb{D}, \Lambda)$ thus $f(z)=\sum_{n=1}^{\infty} c_{\lambda_n} z^{\lambda_n}$ and $\{c_{\lambda_n}\}\in\ell^2$.
From Lemma \ref{1} we know that $f(t)$ belongs to the space $L^2 (0,1)$.
It then follows from Lemma \ref{converse} that $f(t)\in\overline{M_{\Lambda}}$, thus from \eqref{representationf}
we have the series representation $f(z)=\sum_{n=1}^{\infty}\langle f, r_n\rangle_{L^2 (0,1)} z^{\lambda_n}$.
But the coefficients of a power series are unique,
thus for each $n\in\mathbb{N}$ we have $c_{\lambda_n}=\langle f, r_n\rangle_{L^2 (0,1)}$. Since $\{c_{\lambda_n}\}\in\ell^2$ then
$\{\langle f, r_n\rangle_{L^2 (0,1)}\}\in\ell^2$. Therefore we conclude that $f(z)\in \overline{M^2_{\Lambda}}(\mathbb{D})$.

Hence \eqref{equivalence} holds, we have $H^2(\mathbb{D}, \Lambda)=\overline{M^2_{\Lambda}}(\mathbb{D})$.\\

\smallskip

Next, if $f\in H^2 (\mathbb{D}, \Lambda)$ then $||f||_{H^2}^2=\sum_{n=1}^{\infty}|c_{\lambda_n}|^2$. Since
$c_{\lambda_n}=\langle f, r_n\rangle_{L^2 (0,1)}$ then relation \eqref{norms} is true. \\

\smallskip

Finally we deal with relation \eqref{coeff}. Let $f$ belong to $H^2 (\mathbb{D}, \Lambda)$
thus $f(z)=\sum_{n=1}^{\infty} c_{\lambda_n} z^{\lambda_n}$.
Fix some $\theta\in [0,2\pi)$ so $f(te^{i\theta})=\sum_{n=1}^{\infty} c_{\lambda_n}\cdot e^{i\lambda_n\theta}\cdot t^{\lambda_n}$.
By Lemma \ref{1} we know that $f(te^{i\theta})$ belongs to the space $L^2 (0,1)$. It follows from Lemma \ref{converse}
that $f(te^{i\theta})$ belongs to $\overline{M_{\Lambda}}$, thus from \eqref{representationf} we have
\[
f(te^{i\theta})=\sum_{n=1}^{\infty}\left(\int_{0}^{1} f(te^{i\theta})\cdot \overline{r_n(t)}\, dt\right)\cdot t^{\lambda_n},
\]
with the series converging uniformly on compact subsets of $[0,1)$. By the uniqueness of coefficients of power series we conclude that
\[
c_{\lambda_n}\cdot e^{i\lambda_n\theta}=\int_{0}^{1} f(te^{i\theta})\cdot \overline{r_n(t)}\, dt,\qquad n=1,2,\dots.
\]
This proves \eqref{coeff}.

The proof of Theorem \ref{HS} is now complete.


\begin{thebibliography}{99}
\bibitem{CE} J. A. Clarkson, P. Erd\H{o}s,
\textit{Approximation by polynomials},
Duke Math. J. \textbf{10}, 5--11 (1943).

\bibitem{Duren} P. L. Duren,
\textit{Theory of $H^p$ spaces},
Pure and Applied Mathematics, $\bf 38$ (Academic Press, New York), p. xii+258, 1970.

\bibitem{Gurariy} V. I. Gurariy, W. Lusky,
\textit{Geometry of M\"{u}ntz spaces and related questions},
Lecture Notes in Mathematics, 1870.
Springer-Verlag, Berlin, xiv+172 pp.
ISBN: 978-3-540-28800-8; 3-540-28800-7, 2005.

\bibitem{Polya} G. H. Hardy, J. E. Littlewood, G. P\'{o}lya,
\textit{Inequalities},
2nd ed., Cambridge Univ. Press, Cambridge, 1952.

\bibitem{Jevtic} M. Jevti\'{c}, D. Vukoti\'{c}, M. Arsenovi\'{c},
\textit{Taylor coefficients and coefficient multipliers of Hardy and Bergman-type spaces},
Springer, Electronic ISBN: 978-3-319-45644-7, 2016.

\bibitem{Koosis} P. Koosis,
\textit{Introduction to Hp spaces},
Cambridge University Press,
Online ISBN: 9780511470950, 2009.

\bibitem{Mashreghi}  J. Mashreghi,
\textit{Representation theorems in Hardy spaces},
Cambridge University Press, Cambridge,
ISBN: 978-0-521-73201-7, 2009.

\bibitem{Zikkos2026Acta} E. Zikkos,
\textit{M\"{u}ntz spaces: Strong Markushevich bases and Spectral Synthesis},
Acta Sci. Math. (Szeged), https://doi.org/10.1007/s44146-026-00235-8, (2026).

\end{thebibliography}

\end{document}